\theoremstyle{plain}
\newtheorem{theorem}{Theorem}[section]
\newtheorem{proposition}[theorem]{Proposition}
\newtheorem{lemma}[theorem]{Lemma}
\theoremstyle{definition}
\newtheorem{remark}[theorem]{Remark}
\newtheorem{definition}[theorem]{Definition}
\date{}
\def\Q{\mathbb Q}
\newcommand\sO{{\mathcal O}}
  \def \tab#1{\kern #1 truein}
  \def\Q{\hbox{${\cal Q}$}}
\begin{document}
 \title{Qregularity and tensor products of vector bundles on smooth quadric hypersurfaces}
\author{Edoardo Ballico and Francesco Malaspina
\vspace{6pt}\\
{\small   Universit\`a di Trento}\\
{\small\it 38050 Povo (TN), Italy}\\
{\small\it e-mail: ballico@science.unitn.it}\\
\vspace{6pt}\\
{\small  Politecnico di Torino}\\
{\small\it  Corso Duca degli Abruzzi 24, 10129 Torino, Italy}\\
{\small\it e-mail: francesco.malaspina@polito.it}}
    \maketitle \def\thefootnote{}
\footnote{Mathematics Subject Classification 2000: 14F05, 14J60. \\
keywords: Spinor bundles; coherent sheaves on quadric hypersurfaces; Castelnuovo-Mumford regularity.}
 \begin{abstract}
 Let $\Q_n \subset \mathbb P^{n+1}$ be a smooth quadric hypersurface. Here we prove that the
 tensor product of an $m$-Qregular sheaf on $\Q_n$ and an $l$-Qregular vector bundle on $\Q_n$ is $(m+l)$-Qregular.
\end{abstract}

\maketitle

 \section{Introduction}
 Let $\Q_n \subset \mathbb P^{n+1}$ be
   a smooth quadric hypersurface. We use the unified notation $\Sigma_*$ meaning that for even $n$ both the spinor bundles $\Sigma_1$ and $\Sigma_2$ are considered, while $\Sigma_\ast = \Sigma$
if $n$ is odd.
 We recall the definition of Qregularity for a coherent sheaf on $\Q_n$ given in \cite{bm}: 
 \begin{definition} A  coherent sheaf $F$ on $\Q_n$ ($n\geq 2$) is said to be $m$-Qregular if one of the following equivalent conditions are satisfied:\begin{enumerate}
 \item $H^i(F(m-i))=0$ for $i=1,\dots ,n-1$, and $H^n(F(m)\otimes \Sigma_*(-n))=0$.\\
 
 \item $H^i(F(m-i))=0$ for $i=1,\dots, n-1$, $H^{n-1}(F(m)\otimes \Sigma_*(-n+1))=0$, and\\
    $H^n(F(m-n+1))=0$.
    \end{enumerate}
  
     In \cite{bm} we defined the Qregularity of $F$, $Qreg (F)$, as the least integer $m$ such that $F$ is $m$-Qregular. We set $Qreg (F)=-\infty$ if there is no such an integer.
  \end{definition}
  
  Here we prove the following property of Qregularity.
 
   \begin{theorem}\label{i1}
   Let $F$ and $G$ be $m$-Qregular and $l$-Qregular coherent sheaves such that $Tor_i(F,G)=0$ for $i>0$. Then $F\otimes G$ is $(m+l)$-Qregular. In particular this holds if one of them is locally free.
    \end{theorem}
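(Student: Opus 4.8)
The plan is to verify directly the cohomological conditions of the definition for $F\otimes G$ at level $m+l$, with the hypothesis $\mathrm{Tor}_i(F,G)=0$ for $i>0$ serving as the engine. First I would fix a locally free resolution $P_\bullet\to G$. The vanishing $\mathrm{Tor}_i(F,G)=0$ for $i>0$ says precisely that $F\otimes P_\bullet$ is exact in positive homological degrees, so $F\otimes P_\bullet\to F\otimes G$ is again a resolution. This reduces every cohomology group of $F\otimes G$ to the cohomology of the sheaves $F\otimes P_j$, and it is exactly here that the phrase ``one of them locally free'' enters: if $G$ is a vector bundle the $\mathrm{Tor}$'s vanish automatically, but the argument only ever uses the $\mathrm{Tor}$-vanishing, never local freeness of $G$ itself.

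Next I would choose $P_\bullet$ adapted to Qregularity. Using the Mumford-type consequences of $l$-Qregularity from \cite{bm} (that $G(l)$ is globally generated and that the successive syzygy kernels are again Qregular with a shift), $G$ admits a finite resolution whose terms in homological degrees $0,\dots,n-1$ are sums of $\mathcal{O}(-l-j)$ and whose top term is built from spinor twists $\Sigma_*(-l-n)$; this is precisely the collection of bundles that the two equivalent forms of the definition are designed to detect. Tensoring with $F$ and twisting appropriately, the associated hypercohomology spectral sequence has $E_1$-terms $H^q(F(m-p))$ for $0\le p\le n-1$ together with a spinor term involving $H^q(F(m)\otimes\Sigma_*(-n))$.

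The ordinary conditions $H^i\big((F\otimes G)(m+l-i)\big)=0$ for $1\le i\le n-1$ then follow as on $\mathbb{P}^n$ for all contributions through total cohomological degree $n-1$: the iterated connecting maps inject $H^i\big((F\otimes G)(m+l-i)\big)$ into successively higher cohomology of the syzygies, and each such obstruction is a group $H^{p}(F(m-p))$ with $1\le p\le n-1$, which vanishes by the $m$-Qregularity of $F$. The genuinely delicate contributions are those landing in cohomological degree $n$; these are governed by the spinor term through the identity $F(m)\otimes\Sigma_*(-n)=F(m-n)\otimes\Sigma_*$, so that they are annihilated by the defining spinor condition (equivalently, after passing to form (2) of the definition, by $H^n(F(m-n+1))=0$ together with the $H^{n-1}$ spinor vanishing). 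I would switch freely between conditions (1) and (2) to match whichever degrees actually occur at the top.

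I expect the main obstacle to be exactly this top cohomological degree. Unlike Castelnuovo--Mumford regularity on $\mathbb{P}^n$, $m$-Qregularity of $F$ does \emph{not} force $H^n(F(m-n))=0$, so a resolution of $G$ by line bundles alone breaks down at the last syzygy, where the uncontrolled group $H^n(F(m-n))$ appears. The whole point of the argument is that the spinor summand of $P_\bullet$, together with the identity above, replaces this uncontrolled group by a spinor cohomology that the definition does kill. Making the spinor-augmented resolution of $G$ precise (via \cite{bm} and the tautological sequences for $\Sigma_*$, with the restriction identity $\Sigma_*|_{\Q_{n-1}}\cong\Sigma_*$ available should an induction on $n$ be needed for the base construction), and checking that the degrees line up consistently in both formulations of Qregularity, is where the real work lies; the remainder is a transcription of the classical projective-space computation.
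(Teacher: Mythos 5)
You have correctly isolated the crux --- $m$-Qregularity does \emph{not} force $H^n(F(m-n))=0$ --- but your proposed remedy does not work. Resolve $G$ as you suggest, $0\to P_n\to\dots\to P_0\to G\to 0$ with $P_j$ a sum of copies of $\mathcal{O}(-l-j)$ for $0\le j\le n-1$ and any top term $P_n$ whatsoever (spinor-built or not); the Tor hypothesis keeps $F\otimes P_\bullet$ exact, and the chase for $H^i\bigl((F\otimes G)(m+l-i)\bigr)$, $1\le i\le n-1$, produces the obstruction groups $H^{i+j}\bigl(F(m-i-j)\bigr)$ for $0\le j\le n-i$. Those with $i+j\le n-1$ vanish by $m$-Qregularity of $F$, but the last one, coming from the \emph{line-bundle} term $P_{n-i}$ in homological degree $n-i\le n-1$, is exactly $H^n(F(m-n))$. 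The spinor top term sits in homological degree $n$ and for $i\ge 1$ contributes only to $H^{i+n}=0$, so it never enters the chase; and the identity $F(m)\otimes\Sigma_*(-n)=F(m-n)\otimes\Sigma_*$ relates spinor-twisted groups to each other --- it does not convert the untwisted group $H^n(F(m-n))$ into anything either form of the definition kills. The gap is not cosmetic: for $F=\mathcal{O}$ and $m=0$ one has $H^n(\mathcal{O}(-n))\cong H^0(\mathcal{O})^\ast\ne 0$, so whenever $\mathcal{O}(-m)$ is a direct summand of $F$ your chase is genuinely inconclusive. (Separately, the spinor-topped resolution you posit is asserted, not constructed: iterating global generation as in \cite{bm} gives line-bundle terms in degrees $0,\dots,n-1$ and an $n$-Qregular locally free sheaf at the top, with no spinor structure.)

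The ingredient missing from your sketch is a splitting device, which is how the paper closes exactly this hole (and is the ``one more vanishing'' its introduction flags relative to Chipalkatti's argument on $\mathbb P^n$). Lemma \ref{a1} shows that if an $l$-Qregular sheaf $G$ has $h^n(G(l-n))\ne 0$, then by duality $h^0(G^\ast(l))\ne 0$, and composing the resulting nonzero map $G(l)\to\mathcal{O}$ with a surjection $\mathcal{O}^N\to G(l)$ (global generation of $0$-Qregular sheaves) splits off $\mathcal{O}(-l)$ as a direct factor of $G$. One therefore reduces to indecomposable sheaves, disposes of the factor $\mathcal{O}(-l)$ trivially, and in all remaining cases has the extra vanishing $H^n(G(l-n))=0$. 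With that in hand the paper runs the mirror image of your chase: it resolves $F(m)$ (line bundles $\mathcal{O}(-j)$ in degrees $j<n$, an $n$-Qregular bundle on top), tensors with $G(l)$, and the obstructions become $H^{i+j}(G(l-i-j))$, all killed by $l$-Qregularity of $G$ plus the extra vanishing, while the spinor condition $H^n\bigl(F\otimes G(m+l)\otimes\Sigma_*(-n)\bigr)=0$ follows from the degree-zero term alone. Your lower-degree bookkeeping and your use of the Tor hypothesis are fine; but without Lemma \ref{a1} (applied to $F$, in your orientation) the uncontrolled group $H^n(F(m-n))$ cannot be removed, and no choice of resolution of $G$ will do it for you.
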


The corresponding result is true taking as regularity either the Castelnuovo-Mumford regularity or (for sheaves on a Grassmannian) the Grassmann regularity
defined by J. V. Chipalkatti (\cite{c}, Theorem 1.9). The corresponding result is not true (not even if $G$ is a line bundle) on many varieties with respect to geometric collections
or $n$-block collections (very general and very important definitions of regularity discovered by L. Costa and R.-M. Mir\'{o}-Roig)  (\cite{cm1}, \cite{cm2}, \cite{cm3}). Our definition of Qregularity
on smooth quadric hypersurfaces was taylor-made to get splitting theorems and to be well-behaved with
respect to smooth hyperplane sections. Theorem \ref{i1} gives another good property of it. To get Theorem \ref{i1} we easily adapt Chicalpatti's proof of \cite{c}, Theorem 1.9, except that
we found that in our set-up we need one more vanishing. Our proof of this vanishing shows that on smooth quadric hypersurfaces our definition of Qregularity easily gives
splitting results (see Lemma \ref{a1}).

\section{The proof}\label{S2}

Set $\mathcal {O}:= \mathcal {O}_{\Q_n}$.

 \begin{lemma}Let  $F$  be a $0$-Qregular coherent sheaf on $\Q_n$. Then $F$ admits a finite locally free resolution of the form: $$0\rightarrow K^n\rightarrow\dots \rightarrow K^0\rightarrow F\rightarrow 0,$$ where $K^j$ ($0\leq j<n$) is a finite direct sum of line bundles $\sO(-j)$ and $K^n$ is an $n$-Qregular locally free sheaf.
  \begin{proof} Since $F$ is globally generated (\cite{bm}, proposition 2.5), there is a surjective map $$H^0(F)\otimes \sO\rightarrow F.$$ The kernel $K$ is a coherent sheaf and we have the exact sequence  $$0\to K\to H^0(F)\otimes \sO\rightarrow F\to 0.$$ Since
  the evaluation map  $H^0(F)\otimes \sO\rightarrow F\to 0$ induces a bijection of global sections, $H^1(K)=0$ . From the sequences $$H^{i-1} (F(-i+1))\rightarrow H^i(K(-i+1))\rightarrow H^0(F)\otimes H^i(\sO(-i+1))\rightarrow 0,$$
  we see that $H^i(K(-i+1))=0$ for any $i$ ($1<i<n$).\\
  From the sequences $$H^{n-1} (F)\otimes \Sigma_*(-n+1)\rightarrow H^n(K(1)\otimes \Sigma_*(-n))\rightarrow H^0(F)\otimes H^n(\Sigma_*(-n+1))\rightarrow 0,$$
  we see that $H^n(K(1)\otimes \Sigma_*(-n))=0$. We conclude that $K$ is $1$-Qregular.\\
  We apply the same argument to $K$ and we obtain a surjective map $$H^0(K(1))\otimes \sO(-1)\rightarrow K$$ with a $2$-Qregular kernel. By the syzygies Theorem we obtain the claimed resolution.
  
  \end{proof}
  \end{lemma}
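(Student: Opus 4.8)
The plan is to build the resolution recursively, peeling off one locally free term at a time while showing that each successive syzygy gains one in Qregularity, and then to invoke Hilbert's syzygy theorem on the smooth $n$-fold $\Q_n$ to force the $n$-th syzygy to be locally free. First I would use that a $0$-Qregular sheaf is globally generated (\cite{bm}, Proposition 2.5) to produce a surjection $H^0(F)\otimes\sO\to F$ which is an isomorphism on global sections, and set $K^0:=H^0(F)\otimes\sO$ and $K:=\ker(K^0\to F)$, so that $0\to K\to K^0\to F\to 0$. The crux is to prove that $K$ is $1$-Qregular, which I would do by twisting this sequence and chasing the resulting long exact cohomology sequences.

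Concretely, $H^1(K)=0$ is immediate from the isomorphism on $H^0$ together with $H^1(\sO)=0$. The vanishings $H^i(K(1-i))=0$ for $1<i<n$ follow from the $0$-Qregularity vanishings $H^{i-1}(F(1-i))=0$ combined with the absence of intermediate cohomology of line bundles on the quadric, $H^i(\sO(1-i))=0$ (this is where $\Q_n$ being arithmetically Cohen--Macaulay enters). For the top degree I would tensor the sequence by the locally free sheaf $\Sigma_*(-n+1)$ and read off, from $H^{n-1}(F\otimes\Sigma_*(-n+1))=0$ (condition (2) of $0$-Qregularity) together with the vanishing of $H^n(\Sigma_*(-n+1))$, that $H^n(K(1)\otimes\Sigma_*(-n))=0$.

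The hard part is precisely this last input, $H^n(\Sigma_*(-n+1))=0$, which has no counterpart in the projective-space story and is the one genuinely quadric-specific ingredient. I expect to prove it by Serre duality, $H^n(\Sigma_*(-n+1))\cong H^0(\Sigma_*^\vee(n-1)\otimes\omega_{\Q_n})^\vee$, and then to use $\omega_{\Q_n}=\sO(-n)$ together with the self-duality $\Sigma_*^\vee\cong\Sigma_*(1)$ (up to interchanging the two spinor bundles when $n$ is even) to reduce the claim to $H^0(\Sigma_*)=0$, which holds because the normalized spinor bundles carry no global sections.

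Finally I would iterate the whole construction: once $K$ is $1$-Qregular, the twist $K(1)$ is $0$-Qregular and hence globally generated, giving a surjection $H^0(K(1))\otimes\sO(-1)\to K$ whose kernel is $2$-Qregular, and so on. After $n$ steps this produces terms $K^j=H^0(Z_j(j))\otimes\sO(-j)$, each a finite direct sum of copies of $\sO(-j)$ for $0\le j<n$, together with an $n$-Qregular $n$-th syzygy. Because $\Q_n$ is smooth of dimension $n$, its local rings are regular of dimension $n$, so the syzygy theorem forces this $n$-th syzygy $K^n$ to be locally free, which yields the asserted resolution.
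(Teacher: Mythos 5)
Your proposal is correct and follows essentially the same route as the paper: global generation of $F$ from $0$-Qregularity, a cohomology chase on the kernel sequence establishing the intermediate vanishings and the quadric-specific condition $H^n(K(1)\otimes\Sigma_*(-n))=0$ from $H^{n-1}(F\otimes\Sigma_*(-n+1))=0$ and $H^n(\Sigma_*(-n+1))=0$, then iteration and the syzygy theorem on the smooth $n$-fold $\Q_n$. Your explicit Serre-duality verification of $H^n(\Sigma_*(-n+1))=0$, reducing via $\omega_{\Q_n}\cong\sO(-n)$ and $\Sigma_*^\vee\cong\Sigma_*(1)$ (with the two spinor bundles possibly interchanged for even $n$) to $H^0(\Sigma_*)=0$, is a correct supplementary detail that the paper takes for granted.
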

  
  \begin{lemma}\label{a1}
  Let $G$ an $m$-Qregular coherent sheaf on $\Q_n$ such that $h^n(G(-m-n)) \ne 0$. Then $G$ has $\mathcal {O}(-m)$ as a direct factor.
  \end{lemma}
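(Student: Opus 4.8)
The plan is to produce a splitting $s\colon\sO(-m)\to G$ of a suitable morphism $\phi\colon G\to\sO(-m)$; once we have $\phi\circ s=\mathrm{id}_{\sO(-m)}$ the sheaf $\sO(-m)$ is a direct factor of $G$, with complement $\ker\phi$. The morphism $\phi$ is where the cohomological hypothesis enters. Serre duality on the smooth quadric $\Q_n$, whose dualizing sheaf is $\omega_{\Q_n}\cong\sO(-n)$, turns the top-degree duality $H^n(\mathcal F)^\ast\cong\mathrm{Hom}(\mathcal F,\omega_{\Q_n})$ (valid for every coherent $\mathcal F$) into an identification of the nonvanishing top cohomology in the hypothesis with a nonzero element of $\mathrm{Hom}(G,\sO(-m))$. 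Thus the hypothesis supplies a nonzero homomorphism $\phi\colon G\to\sO(-m)$.

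For the section, I would use the good behaviour of Qregularity under global generation. Since $G$ is $m$-Qregular, $G(m)$ is $0$-Qregular, hence globally generated by \cite{bm}, Proposition 2.5; equivalently the evaluation $H^0(G(m))\otimes\sO(-m)\to G$ is surjective, so the images of the finitely many maps $t\colon\sO(-m)\to G$ coming from a basis of $H^0(G(m))=\mathrm{Hom}(\sO(-m),G)$ generate $G$. Composing with $\phi$ gives $\phi\circ t\in\mathrm{Hom}(\sO(-m),\sO(-m))=H^0(\sO)$, a constant. If every such constant were zero then $\phi$ would vanish on a generating family and hence be the zero map, contradicting $\phi\neq0$; so some $t_0$ gives $\phi\circ t_0=c\neq0$. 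Setting $s:=c^{-1}t_0$ we obtain $\phi\circ s=\mathrm{id}_{\sO(-m)}$, and therefore $G\cong\sO(-m)\oplus\ker\phi$.

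The step I expect to require the most care is matching the twist in the Serre-duality translation, i.e. checking that the top-degree nonvanishing in the hypothesis corresponds to a morphism landing in $\sO(-m)$ (the summand we are after) rather than in some other twist of $\sO$; this is exactly the point where $\omega_{\Q_n}\cong\sO(-n)$ must be used, and it is consistent with the model case $G=\sO(-m)$, whose relevant top cohomology sits precisely in the degree prescribed by the canonical bundle. The remaining inputs are soft: the existence of $\phi$ is pure duality, and the production of the splitting section rests only on global generation, which is why, as announced in the introduction, Qregularity yields splitting theorems on quadrics with essentially no extra work.
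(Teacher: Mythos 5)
Your argument is correct in substance and is essentially the paper's own proof: top-degree Serre duality in the coherent form $H^n(\mathcal F)^\ast\cong\mathrm{Hom}(\mathcal F,\omega)$ of \cite{ak}, with $\omega_{\Q_n}\cong\sO(-n)$, produces a nonzero morphism from $G$ to a twist of $\sO$, and global generation of the $0$-Qregular sheaf $G(m)$ (\cite{bm}, Proposition 2.5) splits it. Your rescaling of a single evaluation map $t_0$ is the same mechanism as the paper's choice of a section $v$ of the surjection $\tau\circ u:\sO^N\to\sO$.

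The one discrepancy is the twist you yourself flagged as delicate, and it deserves to be made explicit. From the hypothesis as printed, $h^n(G(-m-n))\neq 0$, duality gives $\mathrm{Hom}(G(-m-n),\sO(-n))=\mathrm{Hom}(G,\sO(m))\neq 0$, \emph{not} $\mathrm{Hom}(G,\sO(-m))\neq 0$; with a map $\phi:G\to\sO(m)$ your compositions $\phi\circ t$ would lie in $H^0(\sO(2m))$, and the rescaling trick breaks for $m\neq 0$. But the printed hypothesis is a misprint: taken literally the lemma fails for $G=\sO$ and $m=1$ (then $h^n(\sO(-1-n))\neq 0$, yet $\sO(-1)$ is certainly not a direct factor of $\sO$), and the lemma is invoked in the proofs of Theorem \ref{i1} and Proposition \ref{p1} through its contrapositive ``we may assume $H^n(G(l-n))=0$'', i.e.\ with hypothesis $h^n(G(m-n))\neq 0$, which dualizes exactly to $\mathrm{Hom}(G(m),\sO)=\mathrm{Hom}(G,\sO(-m))\neq 0$ and is what the model case $G=\sO(-m)$ satisfies. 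So your calibration against the model case silently performed the same correction that the paper's own proof performs (its first line passes from $h^0(G^\ast(m))\neq 0$ to a map $\tau:G(m)\to\sO$, in effect reading $G^\ast(m)$ as $(G(m))^\ast$): what you have proved is the intended statement, which is the one the paper actually uses.
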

  
  \begin{proof}
  Since $h^n(G(-m-n)) \ne 0$, $h^0(G^\ast (m)) \ne 0$ (\cite{ak}, theorem at page 1). Hence there is a non-zero map $\tau :G(m) \to \mathcal {O}$. Since $G(m)$ is $0$-Qregular, it is spanned
  (\cite{bm}, proposition 2.5), i.e.
  there are an integer $N>0$ and a surjection $u: \mathcal {O}^N \to G(m)$. Every non-zero map
  $\mathcal {O} \to \mathcal {O}$ is an isomorphism. Hence $\tau \circ u$ is surjective and there is $v: \mathcal {O} \to \mathcal {O}^N$
  such that $(\tau \circ u)\circ v$ is the identity map of $\mathcal {O}$. Hence the maps $\tau$ and $v\circ u : \mathcal {O} \to G(m)$ show
  that $G(m) \cong \mathcal {O}\oplus G'$ with $G'\cong \mbox{Ker}(\tau )$.
   \end{proof}
   
   \vspace{0.3cm}
   
\qquad {\emph {Proof of Theorem \ref{i1}.}} We first reduce to the case in which $G$ is indecomposable. Indeed, if $G\cong G_1\oplus G_2$ where $G_1$ is $l$-Qregular and $G_2$ is $l'$-Qregular ($l'\leq l$), then $F\otimes G_1$ is $(l+m)$-Qregular and $F\otimes G_2$ is $(l'+m)$-Qregular ($l'+m\leq l+m$) so $F\otimes G\cong (F\otimes G_1)\oplus (F\otimes G_2)$ is $(l+m)$-Qregular.\\
  We can assume that $G$ is not $\sO(-l)$, because the statement is obviously true in this case. Hence by Lemma \ref{a1} we may assume $H^n(G(l-n))=0$.
   Let us tensorize by $G(l)$ the  resolution of $F(m)$. We obtain the following resolution of $F\otimes G$:
  $$0\rightarrow K^n\otimes G(l)\rightarrow\dots \rightarrow K^0\otimes G(l)\rightarrow F\otimes G(m+l)\rightarrow 0,$$
  where $K^j$ ($0\leq j<n$) is a finite direct sum of line bundles $\sO(-j)$ and $K^n$ is a $n$-Qregular locally free sheaf.\\
  Since $$H^n(G(l-n))=\dots =H^1(G(l-1))=0,$$ we have $H^1(F\otimes G(m+l-1))=0$.\\
  Since $$H^n(G(l-n))=\dots =H^2(G(l-2))=0,$$ we have $H^2(F\otimes G(m+l-2))=0$ and so on.\\
  Moreover, $H^n(G(l)\otimes\Sigma_*(-n))=0$ implies $H^n(F\otimes G(m+l)\otimes\Sigma_*(-n))=0$. Thus $F\otimes G$ is $(m+l)$-Qregular.\qed
 \begin{proposition}\label{p1}
 Let $F$ and $G$ be $m$-Qregular and $l$-Qregular  vector bundles on $\Q_n$. If $F$ is not $(m-1)$-Qregular and $G$ is not $(l-1)$-Qregular then $F\otimes G$ is not
$(m+l-1)$-Qregular. In particular $Qreg(F)=Qreg(G)=0$ implies $Qreg(F\otimes G)=0$.
\begin{proof}By the above argument we can prove the result just for $F$ and $G$ indecomposable. Let us assume that   $G$ is not $(l-1)$-Qregular. We can assume that $G$ is not $\sO(-l)$, because the statement is obviously true in this case. Hence by Lemma \ref{a1} we may assume $H^n(G(l-n))=0$.\\
If $H^i(G(l-i-1))\not=0$ for some $i$ ($0>i>n$), and $$H^{i+1}(G(l-1-i-1))=\dots =H^n(G(l-n))=0,$$ we have an injective map $$H^i(G(l-i-1))\rightarrow H^i(F\otimes G(m+l-i-1))$$ and so $H^i(F\otimes G(m+l-i-1))\not=0$. This means that $F\otimes G$ is not
$(m+l-1)$-Qregular.\\
If $H^i(G(l-i-1))=0$ for any $i$ ($0>i>n$) but $H^{n-1}(G\otimes\Sigma_*(-n))=0$ by \cite{bm} Proof of Theorem $1.2.$, we have that $G\cong \Sigma_*(-l)$. By a symmetric argument we may assume that $F\cong \Sigma_*(-m)$. Now we only need to show that $\Sigma_*(-m)\otimes\Sigma_*(-l)$ is not
$(m+l-1)$-Qregular. Indeed since $h^0(\Sigma_*\otimes\Sigma _*(-1))=0$,
  \cite{bm} Proposition $2.5$ implies that $\Sigma_*\otimes\Sigma _*$ is not $(-1)$-Qregular.

\end{proof}
\end{proposition}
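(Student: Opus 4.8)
The plan is to exhibit a nonzero Qregularity-obstruction group for $F\otimes G$ at level $m+l-1$, thereby certifying non-$(m+l-1)$-Qregularity; combined with Theorem~\ref{i1} this pins down $Qreg(F\otimes G)=m+l$, and specializing to $m=l=0$ gives the final assertion. First I would reduce to $F$ and $G$ indecomposable: since the Qregularity of a direct sum is the maximum of those of its summands, some indecomposable summand of $F$ is sharply $m$-Qregular and some summand of $G$ is sharply $l$-Qregular, and their tensor product is a direct summand of $F\otimes G$, so non-regularity for the pair forces it for $F\otimes G$. The case $G\cong\sO(-l)$ (or $F\cong\sO(-m)$) is immediate because tensoring by a line bundle shifts the Qregularity by the twist, so I may assume both bundles are indecomposable and not these line bundles. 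Applying Lemma~\ref{a1} to the $0$-Qregular twists $F(m)$ and $G(l)$ then yields $H^n(F(m-n))=0$ and $H^n(G(l-n))=0$; in particular the top condition of $(l-1)$-Qregularity already holds for $G$, so its sharp non-$(l-1)$-Qregularity must be caused either by some intermediate $H^i(G(l-1-i))\neq0$ with $1\le i\le n-1$, or by the spinor-twisted group $H^{n-1}(G(l-1)\otimes\Sigma_*(-n+1))\neq0$.

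In the first case I would choose $i$ maximal with $H^i(G(l-1-i))\neq0$, so that $H^j(G(l-1-j))=0$ for $i<j\le n-1$ while $H^n(G(l-n))=0$ from the previous step. Tensoring the locally free resolution $0\to K^n\to\cdots\to K^0\to F(m)\to0$ of the first Lemma of Section~\ref{S2} by the bundle $G(l-1-i)$ preserves exactness, and I would feed it into the associated hypercohomology spectral sequence abutting to $H^\bullet(F\otimes G(m+l-1-i))$. The term coming from $K^0$, a sum of copies of $H^i(G(l-1-i))$, is nonzero, and the arranged vanishings $H^j(G(l-1-j))=0$ for $j>i$ remove the neighbouring contributions, so I expect this class to survive and to produce an injection $H^i(G(l-1-i))\hookrightarrow H^i(F\otimes G(m+l-1-i))$. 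This makes $H^i((F\otimes G)(m+l-1-i))\neq0$, so $F\otimes G$ is not $(m+l-1)$-Qregular. The delicate point is to control the remaining differentials---equivalently the top cohomology of the twisted syzygy sheaves---so that the class is genuinely not cancelled; a symmetric argument covers the case where it is $F$ that supplies the nonzero intermediate cohomology.

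The main obstacle, and the feature absent from the Castelnuovo--Mumford setting, is the remaining case, in which the only failure of $(l-1)$-Qregularity for $G$ is the spinor-twisted one. Here I would invoke the cohomological characterization of spinor bundles from \cite{bm} (proof of Theorem~1.2): an indecomposable bundle whose intermediate and top ordinary obstruction groups all vanish but whose spinor-twisted group does not must be $\Sigma_*(-l)$, giving $G\cong\Sigma_*(-l)$, and symmetrically $F\cong\Sigma_*(-m)$. It then remains to verify the single explicit statement that $\Sigma_*(-m)\otimes\Sigma_*(-l)$ is not $(m+l-1)$-Qregular; twisting by $m+l$ reduces this to showing that $\Sigma_*\otimes\Sigma_*$ is not $(-1)$-Qregular, which follows from the spinor computation $h^0(\Sigma_*\otimes\Sigma_*(-1))=0$ together with \cite{bm}, Proposition~2.5, since any $0$-Qregular sheaf is globally generated, so that $(-1)$-Qregularity would force $h^0(\Sigma_*\otimes\Sigma_*(-1))\neq0$. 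I expect this spinor case---both the classification input and the vanishing $h^0(\Sigma_*\otimes\Sigma_*(-1))=0$---to be the real heart of the argument.
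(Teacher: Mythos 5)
Your proposal follows the paper's proof essentially step for step: the same reduction to indecomposable summands, the same exclusion of $G\cong\sO(-l)$ followed by Lemma~\ref{a1} to secure $H^n(G(l-n))=0$, the same dichotomy between an intermediate obstruction (maximal $i$, the locally free resolution of $F(m)$ tensored by $G$, and the resulting injection $H^i(G(l-1-i))\hookrightarrow H^i(F\otimes G(m+l-1-i))$) and the residual spinor case, which you settle exactly as the paper does via the classification $G\cong\Sigma_*(-l)$, $F\cong\Sigma_*(-m)$ from the proof of Theorem~1.2 of \cite{bm} together with $h^0(\Sigma_*\otimes\Sigma_*(-1))=0$ and Proposition~2.5 of \cite{bm}. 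The one point you honestly flag as delicate---controlling the remaining spectral-sequence differentials so that the class coming from $K^0$ survives---is precisely the step the paper itself leaves unargued (it asserts the injection directly from the listed vanishings), so your attempt is correct in the same sense and to the same level of detail as the paper's own proof.
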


\begin{remark} On $\mathbb P^{n}$ if  $F$ is a regular coherent sheaf accoding Castelnuovo-Mumford, then it admits a finite locally free resolution of the form: $$0\rightarrow K^n\rightarrow\dots \rightarrow K^0\rightarrow F\rightarrow 0,$$ where $K^j$ ($0\leq j<n$) is a finite direct sum of line bundles $\sO(-j)$ and $K^n$ is an $n$-regular locally free sheaf. Now arguing as above we can deduce that Theorem \ref{i1} and Proposition \ref{p1} hold also on $\mathbb P^{n}$ for Castelnuovo-Mumford regularity.
\end{remark}


\begin{thebibliography}{99}
 
 \bibitem{ak} {\sc A. Altman and S. Kleiman,} \emph{Introduction to Grothendieck duality theory}, Lect. Notes in Mathematics 146, Springer, Berlin, 1970.

\bibitem{bm} {\sc E. Ballico and F. Malaspina,} \emph{Qregularity and an extension of Evans-Griffiths Criterion to vector bBundles on quadrics}, J. Pure Appl Algebra 213 (2009), 194-202.

\bibitem{c} {\sc J. V. Chipalkatti,} \emph{A generalization of Castelnuovo 
regularity to Grassmann varieties,} Manuscripta Math.
102 (2000), no. 4, 447--464.

\bibitem{cm1} {\sc L. Costa and R. M. Mir\'{o}-Roig,} \emph{Geometric collections
and Castelnuovo-Mumford regularity,} 
Math. Proc. Cambridge Phil. Soc. 143 (2007), no. 3, 557--578.
\bibitem{cm2}{\sc L. Costa and R. M. Mir\'{o}-Roig,} \emph{$m$-blocks collections
and Castelnuovo-Mumford regularity
in multiprojective spaces,} Nagoya Math. J. 186 (2007), 119--155.

\bibitem
{cm3} {\sc L. Costa and R.M. Mir\'o-Roig,}
\emph{Monads and regularity of vector bundles on projective varieties,}  Michigan Math. J.  55  (2007),  no. 2, 417--436.


\end{thebibliography}
\end{document}